\definecolor{mpigreen}{HTML}{5C871D}    
\newcommand{\smb}{\left[\begin{smallmatrix}}
\newcommand{\sme}{\end{smallmatrix}\right]}
\newcounter{mymac@matlab}
  \newcommand{\matlab}{MATLAB%
   \ifnum\value{mymac@matlab}<1%
   \textsuperscript{\textregistered}%
   \setcounter{mymac@matlab}{1}%
   \fi%
  }
\newcommand{\diag}[1]{\ensuremath{\mathop{\mathrm{diag}}\left( #1 \right)}}
\newcommand{\intel}{Intel\textsuperscript{\textregistered}}
\newcommand{\xeon}{Xeon\textsuperscript{\textregistered}}
\newcommand{\range}[1]{\ensuremath{\mathop{\mathrm{range}}\left( #1 \right)}}
\newcommand{\Real}[1]{\ensuremath{\mathop{\mathrm{Re}}\,\left(#1\right)}}
\newcommand{\Imag}[1]{\ensuremath{\mathop{\mathrm{Im}}\,\left(#1\right)}}
\newcommand{\half}{\ensuremath{\frac{1}{2}}}
\newcommand{\myspan}[1]{\ensuremath{\mathop{\mathrm{span}}\left\lbrace #1 \right\rbrace}}
\newcommand{\intab}[2]{\ensuremath{\int\limits_{#1}^{#2}}}
\newcommand\R{\ensuremath{\mathbb{R}}}
\newcommand{\Rn}{\ensuremath{\R^{n}}}
\newcommand{\Rp}{\ensuremath{\R^{p}}}
\newcommand{\Rm}{\ensuremath{\R^{m}}}
\newcommand{\Rr}{\ensuremath{\R^{r}}}
\newcommand{\Rnn}{\ensuremath{\R^{n\times n}}}
\newcommand{\Rpn}{\ensuremath{\R^{p\times n}}}
\newcommand{\Rnm}{\ensuremath{\R^{n\times m}}}
\newcommand{\Rrr}{\ensuremath{\R^{r\times r}}}
\newcommand{\Rrm}{\ensuremath{\R^{r\times m}}}
\newcommand{\Rpr}{\ensuremath{\R^{p\times r}}}
\newcommand\C{\ensuremath{\mathbb{C}}}
\newcommand{\Cnn}{\ensuremath{\C^{n\times n}}}
\newcommand{\Cnm}{\ensuremath{\C^{n\times m}}}
\newcommand{\argmax}{\ensuremath{\mathop{\mathrm{argmax}}}}
\newcommand{\tA}{\ensuremath{\tilde{A}}}
\newcommand{\tB}{\ensuremath{\tilde{B}}}
\newcommand{\tC}{\ensuremath{\tilde{C}}}
\newcommand{\tH}{\ensuremath{\tilde{H}}}
\newcommand{\ty}{\ensuremath{\tilde{y}}}
\newcommand{\tx}{\ensuremath{\tilde{x}}}
\newcommand{\cC}{\ensuremath{\mathcal{C}}}
\newcommand{\cE}{\ensuremath{\mathcal{E}}}
\newcommand{\cH}{\ensuremath{\mathcal{H}}}
\newcommand{\cK}{\ensuremath{\mathcal{K}}}
\newcommand{\cR}{\ensuremath{\mathcal{R}}}
\newcommand{\cT}{\ensuremath{\mathcal{T}}}
\newcommand{\hA}{\ensuremath{\hat{A}}}
\newcommand{\hB}{\ensuremath{\hat{B}}}
\newcommand{\hC}{\ensuremath{\hat{C}}}
\newcommand{\hV}{\ensuremath{\hat{V}}}
\newcommand{\hW}{\ensuremath{\hat{W}}}
\newcommand{\bS}{\ensuremath{\bm{S}}}
 \newcommand{\bm}[1]{\mathbf{#1}}
\theoremstyle{definition}
\newtheorem{definition}{Definition}[section]
\newtheorem{theorem}{Theorem}[section]
\newtheorem{lemma}[theorem]{Lemma}
\newcommand{\expm}[1]{\ensuremath{\mathop{\mathrm{e}^{#1}}}}
\title{Balanced truncation model order reduction in limited time intervals for large systems} %
\author{Patrick K\"{u}rschner \thanks{Max Planck Institute for Dynamics of Complex Technical
Systems,~Sandtorstra{\ss}e~1,~39106~Magdeburg,~Germany,~
\texttt{kuerschner@mpi-magdeburg.mpg.de}.}}
\begin{document}
\maketitle
  \begin{abstract}
  In this article we investigate model order reduction of large-scale systems
  using time-limited balanced truncation, which restricts the well known
  balanced truncation framework to prescribed finite time intervals. The main
  emphasis is on the efficient numerical realization of this model reduction
  approach in case of large system dimensions.  We discuss numerical methods to deal with the resulting
  matrix exponential functions and Lyapunov equations which are
  solved for low-rank approximations.
  Our
main tool for this purpose are rational Krylov subspace methods.
  We also discuss the eigenvalue decay and numerical rank of the solutions of the Lyapunov equations. These results, and also
numerical experiments, will show that depending on the final time horizon, the numerical rank of the Lyapunov solutions in
  time-limited balanced truncation can be smaller compared to standard
  balanced truncation. 
  In numerical experiments we test the approaches for computing low-rank factors of the involved Lyapunov solutions and illustrate
  that time-limited balanced truncation can generate reduced order models having a higher accuracy in the considered time region.
\end{abstract}

\section{Introduction}
\subsection{Model reduction of linear systems}
Consider continuous-time, linear, time-invariant (LTI) systems 
\begin{subequations}\label{morBT:sys}
  \begin{align}
    \dot x(t)&=Ax(t)+Bu(t),\quad x(0)=0,\\
    y(t)&=Cx(t)
  \end{align}
\end{subequations}
with $A\in\Rnn$, $B\in\Rnm$, and $C\in\Rpn$. Typically, the vector functions $x(t)\in\Rn$, $u(t)\in\Rm$, and $y(t)\in\Rp$ are referred to as state, control,
and, output vector, respectively. We assume that $m,p\ll n$ and $A$ is Hurwitz,
i.e., $\Lambda(A)\subset\C_-$, such that~\eqref{morBT:sys} is asymptotically stable. Given a large state space dimension $n$, model order reduction aims
towards finding a reduced order model
\begin{subequations}\label{morBT:rom}
  \begin{align}
    \dot \tx(t)&=\tA\tx(t)+\tB u(t),\quad \tx(0)=0,\\
    \ty(t)&=\tC\tx(t)
  \end{align}
\end{subequations}
with $A\in\Rrr$, $B\in\Rrm$, $C\in\Rpr$, $\tx(t)\in\Rr$ and a drastically reduced state dimension $r\ll n$. The smaller reduced system~\eqref{morBT:rom}
should approximate the input-output behavior or the original system~\eqref{morBT:sys} well. Moreover, simulating the system, i.e., solving the differential
equations in~\eqref{morBT:rom} for many different control functions $u(t)$ should be numerically much less expensive compared to the original
system~\eqref{morBT:sys}.
For the approximation of~\eqref{morBT:sys} regarding the actual time domain behavior, it is desired that for all feasible input functions $u(t)$,
\begin{subequations}\label{morBT:approx_inf}
\begin{align}\label{morBT:timeapprox}
 \ty(t)\approx y(t)\quad\text{for}\quad t\geq0,
\end{align}
in other words, $\|y(t)-\ty(t)\|$ should be small $\forall t\geq0$ in some norm $\|\cdot\|$. With the help of the Laplace transformation, one can also formulate
the approximation problem in the frequency domain, e.g., via
\begin{align}\label{morBT:freqapprox}
 \tH(\imath\omega)&\approx H(\imath\omega)\quad\text{for}\quad \omega\in\R,\quad \imath^2=-1,\\\nonumber
 \quad\text{where}\quad H(s)&=C(sI-A)^{-1}B,\quad \tH(s)=\tC(sI_r-\tA)^{-1}\tB
\end{align}
are the transfer function matrices of~\eqref{morBT:sys} and~\eqref{morBT:rom}.
\end{subequations}
There exist different model order reduction technologies and here we focus on balanced truncation (BT)~\cite{morMoo81} model order reduction. The backbone of
BT are 
the infinite controllability and observability Gramians of
\eqref{morBT:sys} which are the
the  symmetric, positive semidefinite solutions $P_{\infty},Q_{\infty}$ of the continuous-time, algebraic Lyapunov
equations
\begin{align}\label{morBT:gramians}
  AP_{\infty}+P_{\infty}A^T=-BB^T,\quad A^TQ_{\infty}+Q_{\infty}A=-C^TC.
\end{align}
The Hankel singular values (HSV) of~\eqref{morBT:sys} are the eigenvalues
of the product $P_{\infty}Q_{\infty}$ and are system invariants under state space transformations. 
The magnitude of the HSV enables to identify components that are weakly
controllable and observable. In BT this is achieved by first transforming~\eqref{morBT:sys}
into a balanced realization such that
$P_{\infty}=Q_{\infty}=\Sigma_{\infty}=\diag{\sigma_1,\ldots,\sigma_n}$. Dropping all states corresponding to small $\sigma_j$ gives the reduced order
model. Solving~\eqref{morBT:gramians} for the Gramians $P_{\infty},~Q_{\infty}$ is the computationally most challenging part
of balanced truncation. For large-scale systems one therefore uses low-rank approximations of the Gramians instead, e.g., $Z_PZ_P^T\approx P_{\infty}$ with
low-rank
solution factors $Z_P\in\R^{n \times k_P}$, rank$(Z_P)=k_P\ll n$, and likewise for $Q_{\infty}$. This strategy is backed up by the often numerically observed
and theoretically explained
rapid eigenvalue decay of solutions of Lyapunov equations~\cite{AntSZ02,BakES15,Gra04,Pen00,TruV07}.
The computation of the low-rank factors $Z_P,Z_Q$ of $P_{\infty},Q_{\infty}$ can be done efficiently by state-of-the-art numerical algorithms for solving
large Lyapunov equations~\cite{BenS13,Sim16}. BT using low-rank factors $Z_P,Z_Q$ of the Gramians~\eqref{morBT:gramians} is illustrated in
Algorithm~\ref{alg:lrsrbt}.
\begin{algorithm}[t]
  \caption{Square-root balanced truncation with low-rank factors}
  \label{alg:lrsrbt}
  \SetEndCharOfAlgoLine{} \SetKwInOut{Input}{Input}\SetKwInOut{Output}{Output}
  \Input{System matrices $A$,~$B$,~$C$ defining an asymptotically stable
    dynamical system~\eqref{morBT:sys}.}
  \Output{Matrices $\tA$,~$\tB$,~$\tC$ of the reduced system.}  
  Compute $Z_P,~Z_Q$ (e.g., with the methods described in~\cite{BenS13,Sim16}), such  that $Z_PZ_P^T\approx P_{\infty}$, $ Z_QZ_Q^T\approx Q_{\infty}$
in~\eqref{morBT:gramians}.\nllabel{alg:bt_lyap}\;
  \nllabel{alg:BT_firstord_lyap} Compute thin singular value
  decomposition
  \begin{equation*}
    Z_Q^TZ_P=X\Sigma Y^T =\begin{bmatrix} X_1&X_2 \end{bmatrix} \diag{\Sigma_1,~\Sigma_2}
    \begin{bmatrix} Y_1&Y_2 \end{bmatrix}^T
  \end{equation*}
  with $\Sigma_1=\diag{\sigma_1,\ldots,\sigma_r}$ containing the largest $r$
  (approximate) HSV.\; \nllabel{alg:BT_firstord_svd}
  Construct $T:=Z_P Y_1\Sigma_1^{-\half}$ and $S:=Z_QX_1\Sigma_1^{-\half}$.\;
  Generate reduced order model
  \begin{equation}\label{alg:btmor_proj}
    \tA:=S^TAT, \quad \tB:=S^TB, \quad \tC:=CT.
  \end{equation}\;\nllabel{alg:BT_rom}
\end{algorithm}
The $\cH_{\infty}$-norm 
\begin{align*}
 \|H\|_{\cH_{\infty}}=\sup_{\omega\in\R}(\|H(\imath\omega)\|_2). 
 \end{align*}
of a stable system~\eqref{morBT:sys} is the $L_2$ induced norm of the convolution operator.
It connects to the time-domain behavior via $\|y\|_{L_2}\leq \|H\|_{\cH_{\infty}}\|u\|_{L_2}$.
With exact Gramian factors, i.e., $Z_PZ_P^T=P_{\infty}$, $Z_QZ_Q^T=Q_{\infty}$, BT is known to always generate a asymptotically stable reduced system for
which the error bound 
\begin{align}\label{morBT:error}
  \|H-\tH\|_{\cH_{\infty}}\leq 2\sum\limits_{j=r+1}^n\sigma_j
\end{align}
holds.  
\subsection{Model reduction in limited time- and frequency intervals}
The approximation paradigms~\eqref{morBT:approx_inf} enforce that the reduced system~\eqref{morBT:rom} is accurate for all times $t\in\R_+$ and frequencies
$\omega\in\R$. From a practical point of view, achieving~\eqref{morBT:approx_inf} might overshoot a realistic objective. For instance, if~\eqref{morBT:sys}
models a mechanical or electrical system, practitioners working with this model (and its approximation~\eqref{morBT:rom}) might only be interested in certain
finite frequency intervals $0\leq\omega_1<\omega_2<\infty$. Likewise, when the final goal is to carry out simulations of~\eqref{morBT:sys}, i.e., acquire
time-domain solutions for various controls $u(t)$, one is usually only interested in $y(t)$ for $t$ smaller than some final time $t_e<\infty$. Hence, we
consider time-
and frequency restricted versions of~\eqref{morBT:approx_inf} of the form
\begin{subequations}\label{morBT:approx_lim}
\begin{alignat}{2}\label{morBT:approx_limT}
 \ty(t)&\approx y(t)\quad&\text{for}\quad t&\in\cT\subset\R_+,\\\label{morBT:approx_limF}
 \tH(i\omega)&\approx H(i\omega)\quad&\text{for}\quad \omega&\in\Omega\subset\R,~\Omega=-\Omega,
\end{alignat}
\end{subequations}
where the time- and frequency regions $\cT,\Omega$ of interest should be provided by the underlying application.
The expressions~\eqref{morBT:approx_lim} demand that the reduced order model~\eqref{morBT:rom} is only accurate in $\cT,\Omega$ but allow larger
errors outside these regions. Compared to MOR approaches for the unrestricted setting~\eqref{morBT:approx_inf}, it is desired that MOR
approaches for~\eqref{morBT:approx_lim} achieve smaller approximation errors in $\cT,\Omega$ with the same reduced order $r$. Alternatively, one demands that 
  time- and frequency restricted MOR leads to comparable approximation errors in $\cT,\Omega$ with reduced systems of smaller order $r$.
 A secondary question is how the added time- and frequency restrictions influence the computational effort compared to an unrestricted MOR
method of the same type. This issue will be in our particular focus. 
Typically, the time region in~\eqref{morBT:approx_limT} has the form 
\begin{subequations}\label{morBT:approx_timelim}
\begin{align}\label{morBT:approx_timelimT}
 \cT=[0,t_e],\quad t_e<\infty
\end{align}
which will also be the main situation in this work, but the more general restriction
\begin{align}\label{morBT:approx_timelimT12}
 \cT=[t_s,t_{e}],~0<t_s<t_{e}<\infty
\end{align}
\end{subequations}
will also be briefly discussed. Regarding the frequency restricted setting~\eqref{morBT:approx_limF}, the typical regions
are 
\begin{align*}
 \Omega:=\hat\Omega\cup-\hat\Omega, \hat\Omega:=\bigcup\limits_{i=1}^h[\omega_i,\omega_{i+1}]\quad\text{with}\quad
0\leq\omega_1<\ldots<\omega_{h}<\omega_{h+1}<\infty.
\end{align*}
Introducing time- or frequency restrictions into balanced truncation MOR has been originally proposed in~\cite{morGawJ90} and further studied in, e.g.,
\cite{morBenKS16,morBenD16,morFehEtal13,morGugA04}.
In certain applications, e.g., circuit design, only single frequencies $\omega\in\R$ might be of interest and an associated version of balanced truncation is
addressed in~\cite{BenDYetal13}. $\cH_2$-MOR with limitations or weights on the frequency and time domain is
investigated in, e.g.,~\cite{BeaBG15,morGoyR17,Hal92,Pet13,morPetL14,morSin17,Vui14}. As continuation of our work
in~\cite{morBenKS16} regarding frequency-limited BT, we consider in this paper the numerically efficient
realization of time-limited balanced truncation (TLBT) for large-scale systems.
\subsection{Overview of this Article}
We start in Section~\ref{sec:tlbt} by reviewing the general concept of time-limited BT from~\cite{morGawJ90}, mainly for restrictions of the form
\eqref{morBT:approx_timelimT}. This includes the associated time-limited Gramians as well as the respective Lyapunov equations.
Similar to standard BT, executing TLBT for large systems heavily relies on how well the time-limited Gramians can be approximated by low-rank factorizations.
This issue is investigated in Section~\ref{sec:decay} where we have a particular interest in the question when $t_e$
induces significant differences between the infinite and time-limited Gramians.  The actual issue of numerically dealing with the arising matrix functions and
computing the low-rank factors of the time-limited Gramians is topic of Sections~\ref{sec:expm} and \ref{sec:lr-gram}, respectively. Motivated by the
promising results for frequency-limited
BT studied in~\cite{morBenKS16}, we again employ rational Krylov subspace methods for this task. 
Section~\ref{sec:various} collects
different generalizations of TLBT including general state-space and certain differential algebraic systems, the time restriction
\eqref{morBT:approx_timelimT12}, and stability preservation.
In Section~\ref{sec:numex}, the proposed numerical approach is tested numerically with respect to the approximation of the Gramians as well as the
reduction of the dynamical
system. 
\subsection{Notation}
The
real and complex numbers are denoted by, respectively, $\R$ and $\C$,
 $\R_-,~(\R_+)$, $\C_-~(\C_+)$ are the sets of
strictly negative (positive) real numbers and the open left (right) half plane.
The space of real (complex) Matrices of dimension $n\times m$
is $\Rnm~(\Cnm)$. For any complex quantity
$X=\Real{X}+\imath\Imag{X}$, we denote by $\Real{X}$ and $\Imag{X}$ its real and, respectively,
imaginary parts with $\imath$ being the imaginary unit, and its  
the complex conjugate is $\overline{X}=\Real{X}-\imath\Imag{X}$. 
The absolute value of any real or complex scalar is denoted by $|z|$. 
Unless stated otherwise, $\|\cdot\|$ is the Euclidean vector- or subordinate
matrix norm (spectral norm). By $A^T$ and $A^H=\overline{A}^T$ we indicate the transpose and
 complex
conjugate transpose of a real and complex matrix, respectively. 
If $A\in\Cnn$ is a nonsingular, its inverse is $A^{-1}$, and $A^{-H}=(A^H)^{-1}$. 
Expressions of the form $x=A^{-1}b$ are always to be understood as solving a linear system $Ax=b$ for $x$.
The identity matrix of
dimension $n$ is indicated by $I_n$, and the vector of ones is denoted by $\mathbf{1}_m:=(1,\ldots,1)^T\in\Rm$.
The notation $A\succ0$ ($\prec0$) indicates 
symmetric positive (negative) definiteness of a symmetric or Hermitian 
matrix $A$, $\succeq$ ($\preceq$) refers to semi-definiteness, and $A\succeq(\preceq) B$ means $A-B\succeq(\preceq)0$.
The spectrum of a matrix $A$ is denoted by $\Lambda(A)$. 

\section{Gramians and Balanced Truncation for Finite Time Horizons}\label{sec:tlbt}
Since $A$ is assumed to be Hurwitz, the infinite Gramians $P_{\infty},~Q_{\infty}$~\eqref{morBT:gramians} can be represented in integral form as
\begin{align}\label{morBTlim:gram_int}
  P_{\infty}&=\intab{0}{\infty}
  \expm{At}BB^T\expm{A^Tt}\mathrm{d}t,\quad
  Q_{\infty}=\intab{0}{\infty}
  \expm{A^Tt}C^TC\expm{At}\mathrm{d}t.
\end{align}
Restricting the integration limits in the integrals~\eqref{morBTlim:gram_int} to
a time interval $[t_s,t_e]$ immediately yields the 
\textit{time-limited Gramians} $P_{\cT}$, $Q_{\cT}$.
\begin{definition}[Time-limited Gramians~\cite{morGawJ90}]\label{def:TLgram}
 The time-limited reachability and
  observability Gramians of~\eqref{morBT:sys} with respect to the time-interval $\cT=[t_s,t_{e}],~0\leq t_e<t_{e}<\infty$ are defined by
  \begin{align}\label{morBTlim:gramT_int}
     P_{\cT}&=\intab{t_s}{t_{e}}
  \expm{At}BB^T\expm{A^Tt}\mathrm{d}t,\quad
  Q_{\cT}=\intab{t_s}{t_{e}}
  \expm{\cT}C^TC\expm{At}\mathrm{d}t.
  \end{align}
\end{definition}
\begin{theorem}[Lyapunov equations for the time-limited Gramians~\cite{morGawJ90}]\label{thm:timelimgcale}
 The time-limited Gramians
  $P_{\cT}$ and $Q_{\cT}$ according to Definition~\ref{def:TLgram} are equivalently given in the following ways.
  \begin{enumerate}
  \item The finite time Gramians $P_{\cT},~Q_{\cT}$ from~\eqref{morBTlim:gramT_int} are given by
    \begin{subequations}\label{morBTlim:gramT}
    \begin{align}\label{morBTlim:gramlim}
      P_{\cT}&=\expm{At_s}P_{\infty}\expm{A^Tt_s}-\expm{At_{e}}P_{\infty}\expm{A^Tt_{e}},\\ 
      Q_{\cT}&=\expm{A^Tt_s}Q_{\infty}\expm{At_s}-\expm{A^Tt_{e}}Q_{\infty}\expm{At_{e}},
    \end{align}
    where $P_{\infty}$ and $Q_{\infty}$ are the infinite reachability and observability Gramians~\eqref{morBT:gramians}.
 \end{subequations}
  \item The time-limited Gramians $P_{\cT},~Q_{\cT}$ satisfy the
    \textit{time-limited reachability and observability Lyapunov equations}
    \begin{subequations}\label{morBTlim:cale}
      \begin{align}\label{calePT}
        AP_{\cT}+P_{\cT}A^T&=-B_{t_s}B_{t_s}^T+B_{t_{e}}B_{t_{e}}^T,\quad B_{t_s}:=\expm{At_s}B,~B_{t_e}:=\expm{At_e}B\\
        \label{caleQT}
        A^TQ_{\cT}+Q_{\cT}A&=-C_{t_s}^TC_{t_s}+C_{t_{e}}^TC_{t_{e}},\quad C_{t_s}:=C\expm{At_s},C_{t_e}:=C\expm{At_e}.
      \end{align}
    \end{subequations}
  \end{enumerate}
\end{theorem}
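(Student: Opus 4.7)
The plan is to prove part (a) first by manipulating the integral representation, and then use (a) as a shortcut to derive the Lyapunov equations in (b), rather than working with the integral~\eqref{morBTlim:gramT_int} directly.

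For part (a), the key identity is a shift-of-variable in the matrix-exponential integral. I would write the tail integral on $[t_s,\infty)$ of the integrand of $P_\infty$ and substitute $\tau=t-t_s$ to obtain
\begin{equation*}
\intab{t_s}{\infty}\expm{At}BB^T\expm{A^Tt}\,\mathrm{d}t
\;=\;\expm{At_s}\!\left(\intab{0}{\infty}\expm{A\tau}BB^T\expm{A^T\tau}\,\mathrm{d}\tau\right)\!\expm{A^Tt_s}
\;=\;\expm{At_s}P_\infty\expm{A^Tt_s},
\end{equation*}
where I pull $\expm{At_s}$ and $\expm{A^Tt_s}$ outside the integral because they are independent of $\tau$ and commute with $\expm{A\tau}$ and $\expm{A^T\tau}$, respectively (these factors arise from the group property $\expm{A(\tau+t_s)}=\expm{At_s}\expm{A\tau}$, which holds because $At_s$ and $A\tau$ commute). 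The same computation with $t_s$ replaced by $t_e$ gives the analogous tail. Splitting $\int_{t_s}^{t_e}=\int_{t_s}^{\infty}-\int_{t_e}^{\infty}$ then yields the claimed formula for $P_{\cT}$, and an entirely parallel argument with $CC^T$ and transposed factors produces the expression for $Q_{\cT}$.

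For part (b), rather than differentiate under the integral sign, I would plug the representation from (a) into $AP_{\cT}+P_{\cT}A^T$. Using that $A$ commutes with $\expm{At_s}$ and $\expm{At_e}$, and $A^T$ with $\expm{A^Tt_s}$ and $\expm{A^Tt_e}$, the sum collapses to
\begin{equation*}
AP_{\cT}+P_{\cT}A^T
\;=\;\expm{At_s}\bigl(AP_\infty+P_\infty A^T\bigr)\expm{A^Tt_s}
\;-\;\expm{At_e}\bigl(AP_\infty+P_\infty A^T\bigr)\expm{A^Tt_e}.
\end{equation*}
Invoking the infinite Lyapunov equation~\eqref{morBT:gramians}, each inner bracket equals $-BB^T$, so the right-hand side becomes $-\expm{At_s}BB^T\expm{A^Tt_s}+\expm{At_e}BB^T\expm{A^Tt_e}=-B_{t_s}B_{t_s}^T+B_{t_e}B_{t_e}^T$, which is~\eqref{calePT}. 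The observability equation~\eqref{caleQT} follows from the same manipulation applied to the expression for $Q_{\cT}$.

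There is no real obstacle here; the argument is essentially bookkeeping once one notices that all occurring matrix exponentials commute with $A$. The only point that warrants a line of justification is the commutativity statement used twice above, which is immediate from the power series definition of $\expm{At}$.
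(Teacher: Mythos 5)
Your argument is correct and complete; the paper itself gives no proof of this theorem (it is quoted from~\cite{morGawJ90}), so there is nothing to diverge from, and your computation is the standard one. Part (a) via the substitution $\tau=t-t_s$ in the tail integrals and the splitting $\int_{t_s}^{t_e}=\int_{t_s}^{\infty}-\int_{t_e}^{\infty}$ is exactly right, as is deriving (b) by conjugating the infinite-horizon Lyapunov equation with $\expm{At_s}$ and $\expm{At_e}$. One remark worth making: your route to (b) passes through (a) and therefore uses the Hurwitz assumption twice (once for the convergence of the tail integrals, once for the existence of $P_{\infty}$). The alternative you explicitly set aside --- integrating $\tfrac{\mathrm{d}}{\mathrm{d}t}\bigl(\expm{At}BB^T\expm{A^Tt}\bigr)=A\expm{At}BB^T\expm{A^Tt}+\expm{At}BB^T\expm{A^Tt}A^T$ over $[t_s,t_e]$ and invoking the fundamental theorem of calculus --- establishes (b) directly from Definition~\ref{def:TLgram} without any stability assumption, which is precisely why the paper can remark afterwards that $P_{\cT},Q_{\cT}$ still satisfy~\eqref{morBTlim:cale} for unstable $A$ (with uniqueness under $\Lambda(A)\cap\Lambda(-A)=\emptyset$). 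Under the paper's standing assumption that $A$ is Hurwitz, however, your proof is perfectly adequate.
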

Note that the time-limited Gramians~\eqref{morBTlim:gramT_int} also exist for unstable $A$ and if $\Lambda(A)\cap\Lambda(-A)=\emptyset$ they
still solve the Lyapunov equations~\eqref{morBTlim:cale}, see~\cite{morKueR17}. Hence, TLBT might be one possible approach to reduce unstable systems. Except
for one short numerical experiment, we will not pursue this topic any further.
In analogy to the infinite time horizon case, the eigenvalues of the product $P_{\cT}Q_{\cT}$ are called \textit{time-limited Hankel singular values}. A basic
calculation shows
that the time-limited Hankel singular values are, as the standard Hankel singular values, invariant with respect to state-space transformations.
By comparing the Lyapunov equations for the infinite Gramians~\eqref{morBT:gramians} with~\eqref{morBTlim:cale}, one immediately sees that the only differences
are the
inhomogeneities, while the left hand sides are the same unchanged (adjoint) Lyapunov operators. This raises the question how much different the time-limited
Gramians are from the infinite ones, and how this depends on the  time interval of interest. We will pursue this in the next section, especially
regarding the numerically important issue of how well the time-limited Gramians can be approximated by low-rank factorizations $P_{\cT}\approx
Z_{P_{\cT}}Z_{P_{\cT}}^T$, $Q_{\cT}\approx Z_{Q_{\cT}}Z_{Q_{\cT}}^T$. Of course, before approximately solving the time-limited Lyapunov equations, 
the actions of the matrix exponentials $\expm{At_i}$ to $B$ (as well as $\expm{A^Tt_i}$ to $C^T$) have to be dealt with numerically. 
Numerical approaches for handling the matrix exponential and computing low-rank solution factors $Z_{P_{\cT}},~Z_{Q_{\cT}}$ are topic of
Section~\ref{sec:expmLR}.

A square-root version of TLBT is simply carried out by substituting Step~\ref{alg:bt_lyap} of
Algorithm~\ref{alg:lrsrbt} by the code snippet shown in Algorithm~\ref{alg:lrsrtlbt} below%
\LinesNumberedHidden
\begin{algorithm}[h]
 \caption{Required changes in Algorithm~\ref{alg:lrsrbt} for TLBT.}
  \label{alg:lrsrtlbt}
\nlset{1a}
  Compute (approximation of) $B_{t_i}:=\expm{At_i}B$, $C_{t_i}=C\expm{A^Tt_i}$, $i\in\lbrace s,e\rbrace$.\;
\nlset{1b}
  Compute $Z_{P_{\cT}},~Z_{Q_{\cT}}$, such  that $P_{\cT}\approx
Z_{P_{\cT}}Z_{P_{\cT}}^T$, $Q_{\cT}\approx Z_{Q_{\cT}}Z_{Q_{\cT}}^T$ in~\eqref{morBTlim:cale}.\;
\end{algorithm}
\LinesNumbered
and using the low-rank
solution factors $Z_{P_{\cT}}$, $Z_{Q_{\cT}}$ in the remaining steps. 
Depending on the time region of interest, TLBT might in general not be a stability preserving method and, thus, there is also no $\cH_{\infty}$-error
bound similar
to~\eqref{morBT:error}. This can be regained by modifying TLBT further~\cite{morGugA04} and we discuss this issue in
Section~\ref{sec:various}. Without this modification it is possible to establish an error bound in the $\cH_2$-norm~\cite{morKueR17}.
\section{Numerical Computation of Low-rank Factors of Time-Limited Gramians}\label{sec:expmLR}
This section is concerned with the actual numerical computation of low-rank factors of the time-limited Gramians.
Before algorithms for dealing with the matrix exponentials and the Lyapunov equations are discussed, we briefly investigate the numerical ranks of the
time-limited Gramians. 
For the sake of brevity, all considerations will be mostly restricted to the reachability Gramian because the observability Gramian can
be dealt with similarly by replacing $A,B$ with $A^T,C^T$, respectively. Moreover, only the situation~\eqref{morBT:approx_timelimT} will be discussed,
i.e., $t_s=0$ and $0<t_e<\infty$.
\subsection{The Difference of the Infinite and Time-limited Gramians}\label{sec:decay}
In order to approximate $P_{\cT}$, $Q_{\cT}$ by a low-rank factorizations, it is desirable that their eigenvalues decay rapidly. 
For investigating this decay we assume from now that eigenvalues of symmetric (positive definite) matrices are given in a non-increasing order
$\lambda_1\geq\ldots\geq\lambda_n$. The inhomogeneities of the time-limited Lyapunov equations~\eqref{morBTlim:cale} have up to twice the rank of their
unlimited counterparts~\eqref{morBT:gramians}. Hence, by the available theory on the eigenvalue decay of solutions of matrix
equations~\cite{AntSZ02,Gra04,Pen00,Sab07} one expects that the eigenvalues of the time-limited Gramians decay somewhat slower than those of the infinite ones.
We will see in the numerical experiments that, similar to the frequency-limited Gramians, in most situations it is the opposite case: the eigenvalues of the
time-limited Gramians exhibit a faster decay and, consequently, have smaller numerical ranks. Assuming that $(A,B)$ is controllable 
(rank$[A-\lambda I,B]=n,~\forall\lambda\in\C$)
it holds $P_{\infty}\succ0$
and, using~\eqref{morBTlim:gramT_int}, the relation \eqref{morBTlim:gramlim} yields $0\preceq P_{\cT}=P_{\infty}-\expm{At_{e}}P_{\infty}\expm{A^Tt_{e}}$. Since
$E(t_e):=\expm{At_{e}}P_{\infty}\expm{A^Tt_{e}}\succ0$ it holds
\begin{align*}
P_{\infty}\succeq P_{\cT}\quad\text{and also}\quad\lambda_1(P_{\infty})=\|P_{\infty}\|_2\succeq  \|P_{\cT}\|_2=\lambda_1(P_{\cT}).
\end{align*}
Due to the stability of $A$, the difference $E(t_e)=P_{\infty}-P_{\cT}$ will decay for increasing values
of $t_e$. Tight bounds for the eigenvalue behavior of $P_{\cT}$ and $E(t_e)$ with respect to the parameter $t_e$ are  difficult to derive and is a research
topic of its
own. Here, for simplicity we restrict the discussion to the case $m=1$ and present a basic investigation of how the decay of $E(t_e)$ depends on
$t_e$. 
\begin{lemma}
 Let $B\in\Rn$, $(A,B)$ controllable, $A$ be diagonalizable, i.e., $A=X\Lambda X^{-1}$,
$\Lambda=\diag{\lambda_1,\ldots,\lambda_n}$, and define
$w:=X^{-1}B$, $X_B:=X\diag{w}$, $N(t_e):=X\expm{\Lambda t_e}\diag{w}$.  Then $E(t_e)=\expm{At_{e}}P_{\infty}\expm{A^Tt_{e}}=N(t_e)\cC N(t_e)^H$, 
 \begin{align}\label{gramT_fact}
  P_{\cT}=P_{\infty}-E(t_e)=P_{\infty}-N(t_e)\cC N(t_e)^H=X_B\left(\cC-\expm{\Lambda t_e}\cC\expm{\Lambda^H
t_e}\right)X_B^H,
 \end{align}
and $\cC:=\left(\tfrac{-1}{\lambda_i+\overline{\lambda_j}}\right)_{i,j=1}^n$ is a Hermitian positive definite Cauchy matrix.
\end{lemma}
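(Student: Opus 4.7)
The plan is to diagonalize everything in sight and reduce the Lyapunov equation for $P_\infty$ to a scalar formula per entry, from which both claimed identities fall out and the Cauchy structure of $\mathcal{C}$ becomes visible.

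\textbf{Step 1: closed form for $P_\infty$.} First I would substitute $A=X\Lambda X^{-1}$ (and hence $A^T=X^{-H}\Lambda^H X^H$, using that $A$ is real) into the Lyapunov equation \eqref{morBT:gramians}. Pre- and post-multiplying by $X^{-1}$ and $X^{-H}$, and setting $\tilde P:=X^{-1}P_\infty X^{-H}$ and $w:=X^{-1}B$, gives
\begin{equation*}
\Lambda\tilde P+\tilde P\Lambda^H=-ww^H.
\end{equation*}
Since $\Lambda$ is diagonal this is solvable entrywise, yielding $\tilde P_{ij}=-w_i\overline{w_j}/(\lambda_i+\overline{\lambda_j})$, i.e.\ $\tilde P=\diag{w}\,\mathcal{C}\,\diag{w}^H$. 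Multiplying back by $X$ and using $X_B=X\diag{w}$ produces $P_\infty=X_B\mathcal{C}X_B^H$.

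\textbf{Step 2: insert the matrix exponentials.} Using $\expm{At_e}=X\expm{\Lambda t_e}X^{-1}$ and $\expm{A^Tt_e}=X^{-H}\expm{\Lambda^H t_e}X^H$, together with $X^{-1}X_B=\diag{w}$, I get
\begin{equation*}
E(t_e)=X\expm{\Lambda t_e}\diag{w}\,\mathcal{C}\,\diag{w}^H\expm{\Lambda^H t_e}X^H.
\end{equation*}
Since diagonal matrices commute, $X\expm{\Lambda t_e}\diag{w}=X\diag{w}\expm{\Lambda t_e}=X_B\expm{\Lambda t_e}=N(t_e)$, hence $E(t_e)=N(t_e)\mathcal{C}N(t_e)^H$. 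Subtracting from $P_\infty=X_B\mathcal{C}X_B^H$ and factoring out $X_B$ on both sides yields the displayed formula \eqref{gramT_fact} for $P_\mathcal{T}$.

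\textbf{Step 3: structural properties of $\mathcal{C}$.} Writing $\mathcal{C}_{ij}=1/\big((-\lambda_i)+(-\overline{\lambda_j})\big)$ exhibits $\mathcal{C}$ as a Cauchy matrix with nodes $x_i=-\lambda_i$, $y_j=-\overline{\lambda_j}$. Hermiticity is an immediate check: $\overline{\mathcal{C}_{ij}}=-1/(\overline{\lambda_i}+\lambda_j)=\mathcal{C}_{ji}$. For positive definiteness I would use that $\Real{\lambda_i+\overline{\lambda_j}}<0$ (by Hurwitz-ness) so
\begin{equation*}
\mathcal{C}_{ij}=\intab{0}{\infty}\expm{\lambda_i t}\,\overline{\expm{\lambda_j t}}\,\mathrm{d}t,
\end{equation*}
i.e.\ $\mathcal{C}$ is the Gram matrix of $\{\expm{\lambda_i t}\}_{i=1}^n$ in $L^2([0,\infty),\C)$. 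These functions are linearly independent whenever the $\lambda_i$ are pairwise distinct, which is guaranteed here: diagonalizability of $A$ together with controllability of $(A,B)$ forces each eigenvalue of $A$ to have geometric (hence algebraic) multiplicity one (Hautus test). Therefore $\mathcal{C}\succ 0$.

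The only mildly subtle point is justifying distinctness of the $\lambda_i$ — that is where the controllability hypothesis, unused in Steps~1 and~2, finally enters and secures the positive definiteness claim; everything else is a careful diagonalization bookkeeping exercise.
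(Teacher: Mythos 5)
Your argument is correct and follows essentially the same route as the paper: the paper's one-line proof applies the eigendecomposition of $A$ to the identity $P_{\cT}=P_{\infty}-\expm{At_e}P_{\infty}\expm{A^Tt_e}$ together with the factorization $P_{\infty}=X_B\cC X_B^H$, which it simply cites from \cite[Lemma 3.2]{AntSZ02}. The only difference is that you rederive that cited ingredient from scratch (your Steps 1 and 3), including a clean Gram-matrix argument for $\cC\succ0$ that correctly uses controllability plus diagonalizability to guarantee distinct eigenvalues.
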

\begin{proof}
 Apply the eigendecomposition of $A$ and $P_{\infty}=X_B\cC X_B^H$ from~\cite[Lemma 3.2]{AntSZ02} to~\eqref{morBTlim:gramlim}.
\end{proof}
Consider the impulse response of~\eqref{morBT:sys},
\begin{align*}
 y_{\delta}(t)=C\eta(t),\quad \eta(t):=\expm{At}B=X\expm{\Lambda t}w=N(t)\mathbf{1}_n,
\end{align*}
indicating that  the \textit{impulse-to-state-map} $\eta(t)$ and $N(t)$ decay at a similar rate. With the \textit{spectral abscissa}
$R:=\max\limits_{\lambda\in\Lambda(A)}\Real{\lambda}$, the basic point wise bounds 
\begin{align*}
 \|\eta(t)\|\leq \expm{Rt}\|X\|\|w\|,\quad \|N(t)\|\leq \expm{Rt}\|X\|\|w\|_{\infty}
\end{align*}
make this more visible. Using this and~\eqref{gramT_fact}, we can conclude that for increasing~$t$, $E(t)$ is getting smaller
at a similar speed as $\eta(t)$. Consequently, a significant difference between $P_{\cT}$ and $P_{\infty}$ might be only observed when $t_e$ is chosen small
enough in the sense that $\eta(t_e)$ has not reached an almost stationary state. The handling of the case $m>1$ can be carried out similarly.
Moreover, with a similar argumentation the decay rate of the time-limited Hankel singular values can be roughly connected to the decay of $y_{\delta}(t)$. To
conclude, TLBT might be only practicable for small time horizons or for weakly damped systems. 
A similar investigation regarding TLBT for unstable systems is given 
in~\cite{morSin17}.
\subsection{Approximating the Products with the Matrix Exponential}\label{sec:expm}
There are several numerical approaches available to approximate the action of the matrix exponential to (a couple of) vectors, see, e.g.,
\cite{AlmH11,BecR09,CalKOetal14,DavH05,DruKZ09,DruLZ10,FroLS17,FroS08,Gue10,Gue13,Hig08,Kni92,Saa92a}. Here, we are mostly interested in projection methods
using (block)
rational
Krylov subspaces of the form
\begin{align}\label{rks}
 \cR\cK_k=\myspan{q_1,\ldots,q_k},\quad q_k=\left[\prod\limits_{j=1}^k (A-s_jI)^{-1}\right]B
\end{align}
where $s_k\in\C_+\cup\imath\R\cup\lbrace{\infty\rbrace}$ are called shifts. They represent the poles of a rational approximation
$r_k=\psi_{k-1}/\phi_{k-1}$ of
$\mathrm{e}^{z}$ with polynomials $\psi_{k-1},\phi_{k-1}$ of degree at most $k-1$.
Let $Q_k\in\R^{n\times km}$ have orthonormal columns and
$\range{Q_k}=\cR\cK_k$. Then a Galerkin approximation~\cite{Saa92a} of
$\expm{At_{e}}B$ takes the
form
\begin{align}\label{rk_expmv}
 B_{t_e}\approx B_{t_e,k}&:=Q_k\hB_{t_e,k},\quad \hB_{t_e,k}:=\expm{H_kt_e}B_k,\\\nonumber
 H_k&:=Q_k^TAQ_k,\quad B_k:=Q_k^TB.
\end{align}
Note that for $m=1$, the rational function $r_k$ underlying the rational Krylov approximation $Q_k\left(\expm{H_kt_e}\right)Q_k^TB$ interpolates
$f(z)=\mathrm{e}^{zt}$ at $\Lambda(H_k)$ (rational Ritz values)~\cite[Theorem~3.3]{Gue13}. Further information on the approximation properties can be found in,
e.g.,~\cite{BecR09,BerG15,DruKZ09,DruLZ10,DruS11,Gue10}.
In the remainder we assume $s_1=\infty$ s.t. $q_1=B$, $\range{B}\subseteq\range{Q_k}$ and $B_k=[\beta^T,0]^T\in\R^{km\times m}$, where $\beta\in\R^{m\times m}$.
The orthonormal basis matrix $Q_k$ and the restriction $H_k$ can be efficiently computed by a (block) rational Arnoldi process~\cite{Ruh84}. Since $H_k$ is
low-dimensional, $\expm{H_kt_e}$ can be computed by standard dense methods for the matrix exponential~\cite{Hig08}. The choice of shifts $s_k$  ($k>1$) is
crucial for a rapid convergence and several strategies exists for this purpose~\cite{Gue13}. In this work we exclusively use adaptive shift generation
techniques~\cite{DruKZ09,DruLZ10,DruS11} because this appeared to be the safest strategy in the majority of experiments. For the situation $m=1$ and after step
$k$ of the rational Arnoldi process, the next shift $s_{k+1}$ is selected via
\begin{align}\label{rk_adapt}
 s_{k+1}=\argmax_{\partial\bS_k} |r_k(s)|,\quad r_k(s)=\prod\limits_{j=1}^k\tfrac{s-z_j}{s-s_j},
\end{align}
where $z_j$ are the eigenvalues of $H_k$ (Ritz values of $A$); $s_j, j=1,\ldots,k$ are the previously used shifts; and $\partial\bS_k$ marks a set of discrete
points
from the boundary of $\bS_k$ approximating $\Lambda(A)$. We follow~\cite{DruS11} and use $\bS_k$ as the convex hull of $\Lambda(H_k)$. In the symmetric case,
one can also use the spectral interval given by the largest and smallest eigenvalue of $A$~\cite{DruKZ09,DruLZ10,Gue13}. 
For $m>1$ we simply use each $s_j$ in the denominator of $r_k$ in~\eqref{rk_adapt} $m$ times as in~\cite{DruS11}. A different technique to deal with $m>1$
includes, e.g., tangential rational Krylov methods~\cite{DruSZ14}.  
The rational Krylov subspace method was chosen not only because of the good approximation properties, but also because the generated subspace is also a good
candidate to acquire low-rank solution factors of~\eqref{morBTlim:cale}.
\subsection{Computing the low-rank Gramian factors}\label{sec:lr-gram}
Using~\eqref{rks},~\eqref{rk_expmv}, a Galerkin approximation of the time-limited Gramian is $P_{\cT,k}=Q_kY_kQ_k^T\approx P_{\cT}$,
where $Y_k$ solves the projected time-limited Lyapunov equation 
\begin{align}
 H_kY_k+Y_kH_k^T=-B_kB_k^T+\hB_{t_e,k}\hB_{t_e,k}^T,
\end{align}
(cf.~\cite{Saa90}). 
It holds $B_k=\hB_{0,k}=e^{H_k t_s}B_k$ for the special case $t_s=0$ considered here, indicating already how $t_s\neq 0$ can be
included
as well.
Hence, after $B_{t_e,k}$ of sufficient accuracy is found, we follow the idea in~\cite{morBenKS16} by recycling the rational Krylov basis and
continuing 
the rational Arnoldi process for~\eqref{calePT} until $P_{\cT,k}$ leads to a sufficiently small norm of the Lyapunov residual. For the Lyapunov stage, the
same adaptive shift generation~\eqref{rk_adapt} can be used~\cite{DruS11}. The rational Krylov subspace method for generating a low-rank
approximation $Z_kZ_k^T\approx P_{\cT}$ is illustrated in Algorithm~\ref{alg:ksm_tlimcale}. 

\begin{algorithm}[t]
  \SetEndCharOfAlgoLine{} \SetKwInOut{Input}{Input}\SetKwInOut{Output}{Output}
  \caption[Rational Krylov subspace method for time-limited CALEs]{Rational Krylov subspace
    method for time-limited Lyapunov equations~\eqref{calePT}}
  \label{alg:ksm_tlimcale}
  \Input{$A,~B,~t_e$ as in~\eqref{calePT},
    tolerances $0<\tau_f,~\tau_P\ll1$.}
  \Output{$Z_{k}\in\R^{n\times \ell}$ such that
    $Z_{k}Z_k^T\approx P_{\cT}$ with
    $\ell\leq mk\ll n$.}
  $B=q_1\beta$ s.t. $q_1^Tq_1=I_m$, $Q_1=q_1$.\;\nllabel{ksm_ini} 
  \For{$k=1,2,\ldots$} {%
    Get new shift $s_{k+1}$ via, e.g.,~\eqref{rk_adapt}.\;    
    Solve $(A-s_{k+1}I)g=q_k$ for $g$.\nllabel{ksm_linsys}\;
    Real, orthogonal expansion of $Q_k$: 
    $g_+=\Real{g}-Q_k(Q_k^T\Real{g})$.\nllabel{ksm_exp1}\;
    $q_{k+1}=g_+\beta_k$ s.t. $q_{k+1}^Tq_{k+1}=I_m$, $Q_{k+1}=[Q_k,~q_{k+1}]$.\;
    \If{$\Imag{s_{k+1}}\neq 0$}
    {%
    $k=k+1$, $g_+=\Imag{g}-Q_k(Q_k^T\Imag{g})$\;
    $q_{j+1}=g_+\beta_k$ s.t. $q_{k+1}^Tq_{k+1}=I_m$, $Q_{k+1}=[Q_k,~q_{k+1}]$.\nllabel{ksm_exp2}\; 
    }
    $H_k=Q_k^TAQ_k$, $B_k=Q_k^TB$.\nllabel{ksm_Hj}\;
    $\hB_{t_e,k}=\expm{H_kt_e}B_k$, $B_{t_e,k}=Q_k\hB_{t_e,k}$\nllabel{ksm_Bj}\;
    \If{$\|B_{t_e,k}-B_{t_e,k-1}\|/\|B_{t_e,k}\|<\tau_f$\nllabel{ksm_Bj_test}}{%
    Solve
      $H_kY_k+Y_kH_k^T+B_kB_k^T-\hB_{t_e,k}\hB_{t_e,k}^T=0$ for $Y_{k}$.\nllabel{ksm_Pj}\;
      Set
$\mu_k:=\tfrac{\|A(Q_kY_kQ_k^T)+(Q_kY_kQ_k^T)A^T+BB^T-B_{t_e,k}B_{t_e,k}^T\|}{\|B_kB_k^T-\hB_{t_e,k}\hB_{t_e,k}^T\|}$.\nllabel{ksm_Lj}\;
      \If{$\mu_k<\tau_P$\nllabel{ksm_Lj_test}} {%
        $Y_{k}=S\Gamma S^T$, $S^TS=I_{km}$, $\Gamma=\diag{\gamma_1,\ldots,\gamma_{km}}$.\nllabel{ksm_Zj1}\;
       Truncate if necessary: $\Gamma=\diag{\gamma_1,\ldots,\gamma_{\ell}}$, $S=S(:,1:\ell)$, $\ell\leq mk$.\;
        \Return low-rank solution factor
        $Z_{k}=Q_kS\Gamma^{\half}$.\nllabel{ksm_Zj2} \;
        } 
        } 
    }
\end{algorithm}
In the presence of a complex shift $s_{k+1}$, it is implicitly assumed that $\overline{s_{k+1}}$ is the subsequent shift. For this situation, the 
orthogonal expansion in Steps~\ref{ksm_exp1}--\ref{ksm_exp2} of the already computed basis matrix $Q_k$ by real basis vectors goes back to~\cite{Ruh94c}.
The projected matrix $H_k$ in Step~\ref{ksm_Hj}, as well as the Lyapunov residual norm in Step~\ref{ksm_Lj} can be computed without accessing the large matrix
$A$, see, e.g.,~\cite{BerG15,DruS11,Gue13,Ruh84}. 
The small Lyapunov equation in Step~\ref{ksm_Pj} can be solved by standard methods for dense matrix equations, e.g., the  Bartels-Stewart~\cite{BarS72}
method
which we employ here.
Once the scaled Lyapunov residual norm falls below the desired threshold, the rational Arnoldi process is
terminated and the Steps~\ref{ksm_Zj1}--\ref{ksm_Zj2} bring the computed low-rank Gramian approximation in the desired form $Z_{k}Z_k^T$ and allow a rank
truncation. Note that typically, once the approximation of $B_{t_e}$ is found, the generated subspace is already good enough to acquire a low-rank Gramian
approximation without many additional iterations of the rational Arnoldi process. Often, the criteria in Steps~\ref{ksm_Bj_test},\ref{ksm_Lj_test} are satisfied
in the same iteration step.
\section{Extensions and Further Problems}\label{sec:various}
\subsection{Multiple time values}\label{ssec:multitime}
Computing low-rank factors of the time-limited Gramians~\eqref{morBTlim:cale} for the more general approximation setting~\eqref{morBT:approx_timelimT12} with a
nonzero start time $t_s$, i.e., $\cT=[t_s,t_e]$, can also be done using Algorithm~\ref{alg:ksm_tlimcale} with minor adjustments. Having computed a rational
Krylov basis for
approximating $\expm{At_e}B$ for some time value $t_e$, the same basis typically also provides good approximations for any other time values~\cite{Gue13}.
Consequently, Algorithm~\ref{alg:ksm_tlimcale} has to be simply changed by adding $\hB_{t_s,k}=\expm{H_kt_s}B_k$, $B_{t_s,k}=Q_k\hB_{t_s,k}$
and appropriately adjusting the steps regarding the Gramian approximation.
Also the methods relying on Taylor approximations~\cite{AlmH11} can be easily modified to handle multiple time values.
However, even if a nonzero $t_s$ does not yield additional computational complications, this does not imply that TLBT will produce a accurate approximation of
the transient behavior of~\eqref{morBT:sys} in $[t_s,t_e]$. Already the original TLBT paper~\cite{morGawJ90} states that TLBT in $[t_s,t_e]$ is expected to
only give good approximations of the impulse response ($u(t)=v\delta(t)$, $v\in\Rm$) $y_{\delta}(t)=C\expm{At}Bv$
%
and numerical experiments confirm this. 
For an intuitive explanation assume for simplicity that no truncation is done in TLBT, i.e. in 
Step~\ref{alg:BT_firstord_svd} of Algorithm~\ref{alg:lrsrbt}. Then it holds
$\range{Q_k}=\range{T}$, $\expm{At_s}B\approx B_{k,t_s}\in\range{Q_k}$,
$\expm{At_e}B\approx B_{k,t_e}\in\range{Q_k}$, 
and likewise for $C\expm{At_s}$, $C\expm{At_s}$ and $\range{S}$. Hence, the impulse response is accurately approximated at the relevant times. 

For the response of an arbitrary
 input $u(t)$, $y_{u}(t)=C\intab{0}{t}\expm{A(t-\tau)}Bu(\tau)\mathrm{d}\tau$, such argumentation clearly does not automatically hold.
 The value of $y_{u}(t)$ with respect to a general $u(t)$ depends, in general, on the values at the times before $t$.
 In the present form TLBT restricts the approximation to the time frame $\cT$ and, thus, $y_{u}(t)$ will be poorly approximated for $t\leq t_s$ which, in
turn, makes it difficult to acquire good approximations in $\cT$.
 
One approach is to apply a time translation to the underlying system~\eqref{morBT:sys} such that the requested time-interval is
transformed to $[0,t_e-t_s]$.
However, this time translation will also introduce an inhomogeneous initial value $x_0$, which is an additional difficulty for model order reduction. Some
strategies to cope with nonzero initial values are given in~\cite{morBeaGM17,morHeiRA11} and we plan to investigate the incorporation to time-limited BT in the
future.
\subsection{Generalized state-space and index-one descriptor systems}\label{ssec:gen}
Consider generalized state-space systems 
\begin{subequations}\label{morBT:gsys}
  \begin{align}
    M\dot x(t)&=Ax(t)+Bu(t),\quad x(0)=0,\\
    y(t)&=Cx(t)
  \end{align}
\end{subequations}
with $M\in\Rnn$ nonsingular. Simple manipulations reveal that, similar to unrestricted~\cite{Ben04} and frequency-limited BT~\cite{morBenKS16},
the time-limited Gramians of~\eqref{morBT:gsys} are $P_{\cT}$, $M^TQ_{\cT}M$, where $P_{\cT}$, $Q_{\cT}$ solve the time-limited generalized Lyapunov equations
   \begin{subequations}\label{morBTlim:gcale}
    \begin{align}\label{gcalePT}
        AP_{\cT}M^T+MP_{\cT}A^T&=-B_{t_s}B_{t_s}^T+B_{t_{e}}B_{t_{e}}^T,~B_{t_i}:=M\expm{M^{-1}At_i}M^{-1}B,\\
        \label{gcaleQT}
        A^TQ_{\cT}M+M^TQ_{\cT}A&=-C_{t_s}^TC_{t_s}+C_{t_{e}}^TC_{t_{e}},~ C_{t_i}:=C\expm{M^{-1}At_i}
\end{align}
   \end{subequations}
for $t_i=t_s,t_e$. Note that $B_{t_i}:=\expm{AM^{-1}t_i}B$. Low-rank factors of $P_{\cT}$, $Q_{\cT}$ can be computed by methods for
generalized
Lyapunov equations.
In particular, the rational Krylov subspace methods which we employ for approximating $B_{t_i}$ will implicitly work on $M^{-1}A$, $M^{-1}B$ or
alternatively, if $M=LL^T\succ 0$, on $L^{-1}AL^{-T}$, $L^{-1}B$. With low-rank approximations $P_{\cT}\approx Z_{P_{\cT}}Z_{P_{\cT}}^T $, $Q_{\cT}\approx
Z_{Q_{\cT}}Z_{Q_{\cT}}^T$, the SVD of $Z_{Q_{\cT}}^TMZ_{P_{\cT}}$ has to be used in Step~\ref{alg:BT_firstord_svd} of Algorithm~\ref{alg:lrsrbt}.

\smallskip
In some of the numerical experiments we will encounter the situation\\
$M=\smb M_1&0\\0&0\sme,$ $A=\smb A_1&A_2\\A_3&A_4\sme,$ $B=\smb B_1\\B_2\sme,$ $C=\smb C_1&C_2\sme$
with $M_1\in\R^{n_f\times n_f},~A_4\in\R^{n-n_f\times n-n_f}$ nonsingular, s.t.~\eqref{morBT:gsys} becomes a semi-explicit index-one descriptor system.
Eliminating the algebraic constraints leads to a general state-space system defined by $M_1$, 
$\hA:=A_{1}-A_{2}A_{4}^{-1}A_{3}$ and $\hB:=B_1-A_{2}A_{4}^{-1}B_2$, $\hC:=C_1-C_{2}A_{4}^{-1}A_3$, and an additional feed-through term $-C_{2}A_{4}^{-1}B_2$,
see~\cite{morFreRM08}.
Time-limited and unrestricted BT can be applied right away to this system via~\eqref{morBTlim:gcale} defined by $M_1,\hA,\hB,\hC$.
However, the matrix $\hA$ will in general be dense and, thus, solving the linear systems in Algorithm~\ref{alg:ksm_tlimcale} can be very expensive. In the
context of unrestricted BT, the authors of~\cite{morFreRM08} exploit
in a LR-ADI iteration for the Gramians that the arising dense linear systems $(\hA-sM_1)\hV=\hW$ are equivalent to the sparse linear systems $(A-sM)V=W$ with
$V=[\hV^T,\Psi]^T$,~$W=[\hW^T,0]^T$ which are easier to solve numerically. We use the same trick within Step~\ref{ksm_linsys} of
Algorithm~\ref{alg:ksm_tlimcale}.
\subsection{Stability preservation and modified TLBT}\label{ssec:modBT}
Because of the altered and sometimes indefinite right hand sides of \eqref{morBTlim:cale},\eqref{morBTlim:gcale}, TLBT is in general not stability preserving.
Only when the used time interval is long enough such that the right hand sides are negative semi-definite, TLBT will produce an asymptotically stable reduced
order
model~\cite[Condition~1]{morGawJ90}. For the general situation, a stability preserving modification of TLBT is proposed in~\cite{morGugA04} using
the Lyapunov equations
    \begin{align}\label{morBTlim:gcalemod}
    \begin{split}
      AP^{\text{mod}}_{\cT}M^T+MP^{\text{mod}}_{\cT}A^T&=-B_{\text{mod}}B^T_{\text{mod}},\\
        A^TQ^{\text{mod}}_{\cT}M+M^TQ^{\text{mod}}_{\cT}A&=-C^T_{\text{mod}}C_{\text{mod}},
    \end{split}\\\nonumber
  B_{\text{mod}}:=Q_B\diag{|\lambda^B_1|,\ldots,|\lambda^B_{r_B}|}^\half,&\quad C_{\text{mod}}:=\diag{|\lambda^C_1|,\ldots,|\lambda^C_{r_C}|}^\half
Q_{C}^T.
\end{align}
where $Q_B\in\R^{n\times r_B}$, $Q_C\in\R^{n\times r_C}$ contain the eigenvectors corresponding to the $r_B\leq 2m,r_C\leq 2p$ nonzero eigenvalues $\lambda^B_i,
\lambda^C_i$
of the right hand sides of~\eqref{calePT},\eqref{gcalePT} and, respectively,~\eqref{caleQT},\eqref{gcaleQT}. 
The rational Krylov subspace method in Algorithm~\ref{alg:ksm_tlimcale} for $M=I, t_s=0$ can be easily modified for \eqref{morBTlim:gcalemod} by replacing
Step~\ref{ksm_Pj} with the steps shown in Algorithm~\ref{alg:modBT}.
\LinesNumberedHidden
\begin{algorithm}[h]
\SetEndCharOfAlgoLine{}
  \caption{Changes in Algorithm~\ref{alg:ksm_tlimcale} for modified time-limited Gramians}
  \label{alg:modBT}
\nlset{13a}
  Compute partial eigendecomposition 
$\left(\smb\beta\beta^T&0\\0&0\sme-\hB_{t_e,k}\hB_{t_e,k}^T\right)Q_B=Q_B\diag{\lambda^B_1,\ldots,\lambda^B_{r_B}}$, $Q_B^TQ_B=I_{r_B}$, $\lambda^B_i\neq
0$.\;
\nlset{13b}
 Factor of projected modified rhs $B_{\text{mod},k}:= Q_B\diag{|\lambda^B_1|,\ldots,|\lambda^B_{r_B}|}^\half$.\;
 \nlset{13c}
Solve $H_kY_k+Y_kH_k^T+B_{\text{mod},k}B_{\text{mod},k}^T$ for $Y_{k}$.\;
\end{algorithm}
\LinesNumbered
Generalization for $M\neq I$ and $0<t_s<t_e$ are straightforward.
Note that because the modified time-limited Gramians do not fulfill a relation of the form \eqref{morBTlim:gramT_int} or \eqref{morBTlim:gramT}, we cannot
expect a fast singular value decay similar to $P_{\cT},~Q_{\cT}$. 
As observed in~\cite{morBenKS16,morGugA04} for modified frequency-limited
BT, modified TLBT might also lead to less accurate approximations in the considered time region compared to unmodified TLBT.
\section{Numerical Experiments}\label{sec:numex}
Here, we illustrate numerically the results of Section~\ref{sec:expmLR} regarding
the numerical rank of the frequency-limited Gramians as well as the numerical method for computing low-rank factors of $P_{\cT},~Q_{\cT}$.
Afterwards, the quality of the approximations obtained by TLBT with the low-rank factors is evaluated and compared against unlimited BT.
Further topics like nonzero starting times and the modified TLBT scheme are also examined along the way. 
All experiments are done in \matlab~8.0.0.783 (R2012b) on a \intel\xeon CPU
X5650 @ 2.67GHz with 48 GB RAM. 
Table~\ref{tab:examples} list the used examples and their properties. For time-domain simulation of~\eqref{morBT:sys},~\eqref{morBT:gsys} and the reduced order
models for a
given input function $u(t)$, an implicit midpoint rule with a fixed small time step $\Delta t$ is used. 
Because the impulse response 
of~\eqref{morBT:sys} for $u(t)=\delta(t)v$, $v\in\Rm$, $x_0=0$ can be expressed as $y_{\delta}(t) = C\expm{At}Bv$, it is computed
by the same integrator applied to the uncontrolled system ($u(t)=0$) with initial condition $x_0=Bv$.
For
the impulse, or step responses, the vector distributing the control to the columns of $B$ is set to $v=\mathbf{1}_m$. 
\begin{table}[t]
  \footnotesize
  \centering
  \caption{Dimensions, final integration time $t_f$, step size $\Delta t$, matrix
    properties, and source of the test
    systems.}
\setlength{\tabcolsep}{0.5em}
  \begin{tabularx}{\textwidth}{|l|c|c|c|c|c|l|X|}
    \hline
    Example & $n$ & $m$&$p$&$t_f$&$\Delta t$&properties&source\\
    \hline
    \texttt{bips\_606}&7135&4&4&20&0.04&index-1, $n_f=606$, $M_1=I_{n_f}$&morwiki\footnotemark[1],~\cite{morFreRM08}\\
    \texttt{bips\_3078}&21128&4&4&20&0.04&index-1, $n_f=3078$, $M_1=I_{n_f}$&morwiki\footnotemark[1],~\cite{morFreRM08}\\
    \texttt{vertstand}&16626&6&6&600&0.6&$A\prec0$, $M\succ0$, $C$ random&morwiki\footnotemark[1],~\cite{GroSGetal12}\\
    \texttt{rail}&79841&7&6&400&0.4&$A\prec0$, $M\succ0$&Oberwolfach Collection\footnotemark[2], ID=38881\\
    \hline
  \end{tabularx}\label{tab:examples}
\end{table}

Because the \texttt{bips} systems have eigenvalues very close to the imaginary axis which caused difficulties for all considered numerical methods, 
the shifted matrix $A-0.08 M$ is used instead as in~\cite{morFreRM08}. The generalized state-space systems \texttt{vertstand} and
\texttt{rail} are dealt with Cholesky factorizations $M=LL^T$ as explained in Section~\ref{ssec:gen}. 
There, the sparse Cholesky factors $L$ are
computed by the \matlab~command \texttt{chol(M,'vector')}. Matrix exponentials and Lyapunov equations defined by smaller dense matrices (including the
projected ones in Algorithm~\ref{alg:ksm_tlimcale}) are solved directly by the \texttt{expm} and \texttt{lyap} routines.
\footnotetext[1]{\url{https://morwiki.mpi-magdeburg.mpg.de/morwiki}}
\footnotetext[2]{\url{http://portal.uni-freiburg.de/imteksimulation/downloads/benchmark}}
\subsection{Decay of the eigenvalues of the Gramians and the time-limited singular values}
\begin{figure}[t]
  \centering
\includegraphics{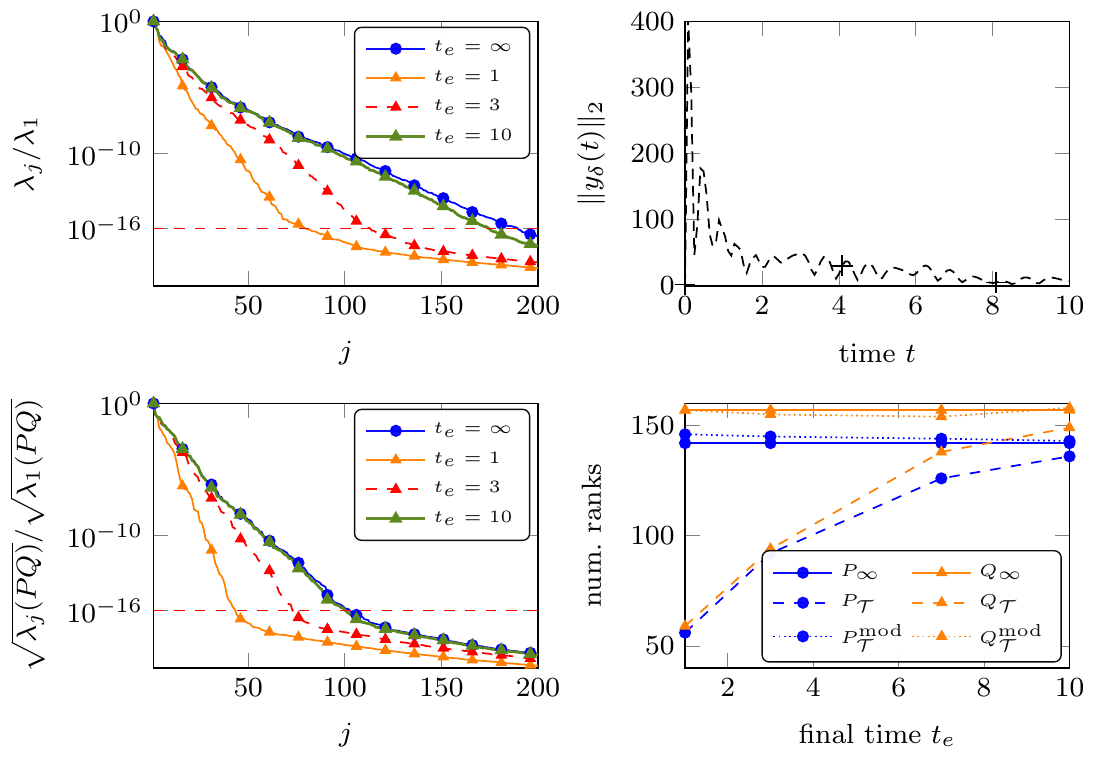}
  \caption{Scaled eigenvalues of $P_{\cT}$ (top left), Hankel and time-limited singular values $\sqrt{\lambda(P_{\cT}Q_{\cT})}$ (bottom left), 
norm of impulse response $y_{\delta}(t)$ (top right), and numerical ranks of infinite, (modified) time-limited Gramians against varying final times
$t_e$ (bottom right) for the \texttt{bips\_606} system.}\label{fig:gram_evdecay}
\end{figure}

At first we investigate how the end time $t_e$ influences the eigenvalue decay of the time-limited Gramians. The index one descriptor system \texttt{bips\_606}
is used for this experiment and reformulated into an equivalent state space system of dimension $n_f=606$ as explained in Section~\ref{ssec:gen}. This
comparatively small size allows a direct computation of the matrix exponentials and the Gramians. The top left plot in Figure~\ref{fig:gram_evdecay} shows the
scaled and ordered eigenvalues
$\lambda_j/\lambda_{1}$  of the infinite reachability Gramian $P_{\infty}$ and the time-limited one $P_{\cT}$ for $t_e=1,3,10$. Obviously, a distinctly faster
eigenvalue decay of  $P_{\cT}$ is only observed for small time values $t_e=1,3$. As the final time increases, the eigenvalues move closer to the ones
of $P_{\infty}$. The eigenvalues of the observability Gramians exhibit a similar behavior. This observation is even more drastic for the decay of the
time-limited Hankel singular values shown in the bottom left plot. For the largest value  $t_e=10$, hardly any difference to the Hankel singular values
is visible. In the top right plot the point wise norm of the impulse response $y_{\delta}(t)$ shows that after $t_e=10$, $y_{\delta}(t)$ has
already almost reached its stationary phase. This confirms the expectation that significant differences between infinite and time-limited Gramians occur
only for times $t_e$ which are
small with respect to the behavior of the impulse response. The bottom right plots shows the numerical rank of infinite, time-limited and modified time-limited
Gramians
against $t_e$. The numerical ranks of $P_{\cT},~Q_{\cT}$ clearly move towards the numerical ranks of $P_{\infty},~Q_{\infty}$ as $t_e$ increases. In contrast,
the numerical ranks of $P^{\text{mod}}_{\cT},~Q^{\text{mod}}_{\cT}$ (Section~\ref{ssec:modBT}) are always close to the ones of $P_{\infty},~Q_{\infty}$ and
appear to be largely unaffected by different values of~$t_e$. This is a very similar behavior as observed for the modified frequency-limited Gramians
in~\cite{morBenKS16}.
\subsection{Computing low-rank factors of the time-limited Gramians}
We proceed by testing the computation of low-rank factors of the infinite and (modified) time-limited reachability Gramians by the rational Krylov subspace
method in Algorithm~\ref{alg:ksm_tlimcale}. The stopping criteria for matrix function and Gramian approximations use the thresholds $\tau_f=\tau_P=10^{-8}$. 
To save some computational cost, the projected matrix exponentials and Lyapunov equations (Steps~\ref{ksm_Bj} and~\ref{ksm_Pj} in
Algorithm~\ref{alg:ksm_tlimcale}) are only dealt with every 5th iteration step.

\begin{table}[tb]
  \footnotesize
  \centering
  \caption[Results of the numerical computation of low-rank factors of the
different Gramians.]{Results of the numerical numerical computation of low-rank factors of the
different Gramians: time horizon $t_e$, generated subspace dimension $d$, rank $r$ of the low-rank approximations, and computing time $t_{\text{rk}}$
in seconds.}
\setlength{\tabcolsep}{0.5em}
  \begin{tabularx}{1\linewidth}{|X|l|l|r|r|r|r|r|r|r|r|}
    \hline   
\multicolumn{2}{|l|}{}&\multicolumn{3}{c|}{$P_{\infty}$}&\multicolumn{3}{c|}{$P_{\cT}$
    } &\multicolumn{3}{c|}{$P_{\cT}^{\text{mod}}$}\\
    \cline{3-11}
    Example&$t_e$&$d$&$r$&$t_{\text{rk}}$&$d$&$r$&$t_{\text{rk}}$&$d$&$r$&$t_{\text{rk}}$\\
    \hline
\texttt{bips\_3078}
&3&308&245&18.33&664&131&105.89&664&241&106.17\\ 
    \hline
\texttt{vertstand}
&300&180&164&6.18&300&140&13.42&300&183&13.55\\ 
      \hline 
\multirow{2}{*}{\texttt{rail}}
&10&245&224&34.71&420&168&74.06&420&228&76.06\\  
&100&245&224&34.71&420&198&76.53&420&227&74.68\\ 
    \hline
  \end{tabularx}\label{tab:lrf_compare}
\end{table}

Table~\ref{tab:lrf_compare} summarizes the used time values $t_e$, the produced subspace dimensions $d$, the ranks $r$ of the low-rank approximations, and the
total computing times $t_{\text{rk}}$ for the Gramians $P_{\infty}$, $P_{\cT}$, and $P^{\text{mod}}_{\cT}$. Apparently, larger rational Krylov  subspaces and
approximately twice as long computation  times are needed to obtain low-rank factors of the time-limited Gramians, but the final ranks of the low-rank
approximations of the time-limited Gramians $P_{\cT}$ are in all cases smaller compared to $P_{\infty}$. The modified time-limited Gramians
$P^{\text{mod}}_{\cT}$ do not show this behavior as they have ranks similar to the infinite Gramians. For the \texttt{rail} example, increasing the time horizon
$t_e$ does not change the required subspace dimension $d$ for the approximation of the time-limited Gramian $P_{\cT}$, but its rank $r$ is clearly larger. In
all cases, no additional steps of the rational Krylov method were necessary once the approximation of $\expm{At_e}B$ was found.    
The results for the observability Gramians were largely similar. For the \texttt{bips\_3078} example, the observability Gramians
appeared to be more demanding for Algorithm~\ref{alg:ksm_tlimcale} than the reachability Gramians. 
\subsection{Model reduction results}
Now we execute BT~\cite{morMoo81} as well as (modified) TLBT~\cite{morGawJ90,morGugA04} using the square-root method (Algorithm~\ref{alg:lrsrbt})
with the low-rank factors of the reachability and observability Gramians computed in the previous section. For this purpose, we restrict ourselves to the
reduction to fixed specified orders $r$.
The approximation quality of the constructed reduced order models is assessed via the point wise and
maximal relative error norms 
\begin{align*}
 \cE(t):=\tfrac{\|y(t)-y_r(t)\|_2}{\|y(t)\|_2},~t\leq t_f,\quad \cE_{\cT}:=\max\limits_{t\in[0,t_e]}\cE(t).
\end{align*}
of the output responses $y(t),~\ty(t)$ of original and reduced order models. We consider the response $y_{\delta}(t)$ for the 
 impulse input $u(t)=\delta(t)\mathbf{1}_m$ for all examples. Moreover, for each  used test system, the transient response with respect to an additional input
signal
$u(t)$ is also considered. We use step like input signals $u(t)=\mathbf{1}_m$ and $u(t)=50\mathbf{1}_m$ for the \texttt{bips\_3078} and, respectively,
\texttt{rail} example, and $u_*:=[5\cdot 10^{4}\cdot 0.198(\sin(t\pi/100)^2), 4, 2, 1, 3,1]^T$ for \texttt{vertstand}.

\begin{table}[tb]
  \footnotesize
  \centering
  \caption[Model reduction results by BT, TLBT, and modified TLBT.]{Model reduction results by BT, TLBT, and modified TLBT using different $t_e$ and $u(t)$:
 reduced order $r$, largest relative error $\cE_{\cT}$ in $[0,t_e]$, overall computation time $t_{\text{mor}}$, and $s\in\lbrace{0,1\rbrace}$ indicates if
reduced system is asymptotically stable or unstable.}
\setlength{\tabcolsep}{0.5em}
  \begin{tabularx}{1\linewidth}{|X|l|l|l|r|r|r|r|r|r|r|r|}
    \hline   
\multicolumn{3}{|l|}{}&\multicolumn{3}{c|}{BT}&\multicolumn{3}{c|}{TLBT}&\multicolumn{3}{c|}{mod. TLBT}\\
    \cline{4-12}
    Example&$u$&$t_e$&$\cE_{\cT}$&$t_{\text{mor}}$&$s$&$\cE_{\cT}$&$t_{\text{mor}}$&$s$&$\cE_{\cT}$&$t_{\text{mor}}$&$s$\\
    \hline
\multirow{2}{*}{
 \begin{minipage}{\linewidth}
  \texttt{bips\_3078},\\
  $r=100$
 \end{minipage}}
&$\delta$&3&5.10e-04&291.3&1&1.08e-06&331.9&0&5.15e-04&399.6&1\\ 
&1&3&6.90e-06&291.3&1&6.33e-09&331.9&0&1.20e-05&399.6&1\\  
    \hline
    \multirow{2}{*}{
 \begin{minipage}{\linewidth}
\texttt{vertstand},\\
$r=20$ 
\end{minipage}}
&$\delta$&300&8.90e-02&13.2&1&2.44e-02&30.4&1&6.48e-02&31.6&1\\ 
&$u_*$&300&8.26e-03&13.2&1&9.18e-04&30.4&1&9.95e-03&31.6&1\\ 
      \hline 
\multirow{4}{*}{
 \begin{minipage}{\linewidth}
      \texttt{rail},\\
       $r=50$ 
 \end{minipage}}      
&$\delta$&10&6.60e-01&62.2&1&5.66e-04&130.8&0&6.59e-01&137.4&1\\ 
&$\delta$&100&6.60e-01&62.2&1&7.90e-03&135.7&1&6.60e-01&136.4&1\\ 
&50&10&1.78e-03&62.2&1&6.08e-07&130.8&0&2.61e-03&137.4&1\\
&50&100&1.78e-03&62.2&1&2.57e-05&135.7&1&2.15e-03&136.4&1\\ 
    \hline
  \end{tabularx}\label{tab:reduc}
\end{table}
The results are given in Table~\ref{tab:reduc} listing the largest relative error $\cE_{\cT}$ in $[0,t_e]$ and the overall computation time $t_{\text{mor}}$,
i.e., the computation time for computing low-rank factors of both reachability and observability Gramians by Algorithm~\ref{alg:ksm_tlimcale}
plus the time for Algorithm~\ref{alg:lrsrbt} to execute the BT variants. We also indicate whether the produced reduced order models are asymptotically stable
$(s=1)$ or unstable $(s=0)$. 
For some selected settings of $u(t)$ and $t_e$, the system responses and point wise relative errors $\cE(t)$ are plotted
against
the time $t$ in
Figure~\ref{fig:response} and Figure~\ref{fig:reduct}, respectively.
Figure~\ref{fig:errvsord} shows the behavior of $\cE_{\cT}$ as the reduced order $r$
increases.

\begin{figure}[t]
  \centering
\includegraphics{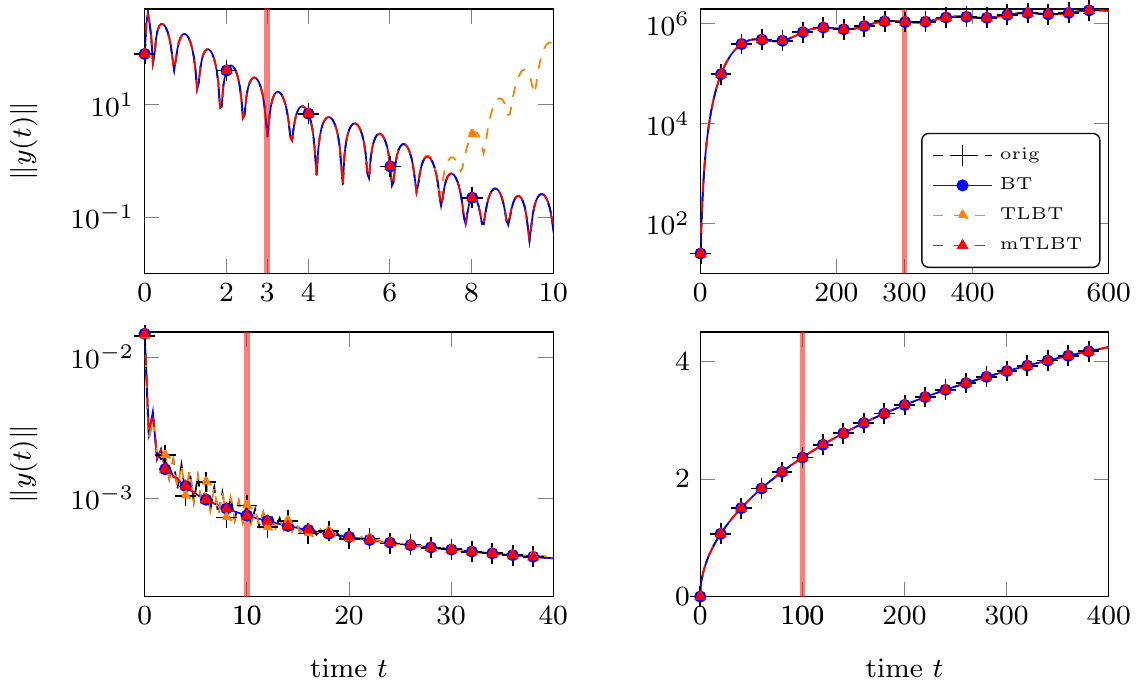}
  \caption{Responses of original and reduced systems obtained by different BT versions: (top left) \texttt{bips\_3078}, $t_e=3$, $u(t)=$impulse,
$r=100$,
(top right) \texttt{vertstand}, $t_e=300$, $u(t)=u_*$, $r=20$, (bottom left) \texttt{rail}, $t_e=10$, $u(t)=$impulse, $r=50$, (bottom left)
\texttt{rail}, $t_e=100$, $u(t)=50$, $r=50$.
}
  \label{fig:response}
\end{figure}

\begin{figure}[t]
  \centering
\includegraphics{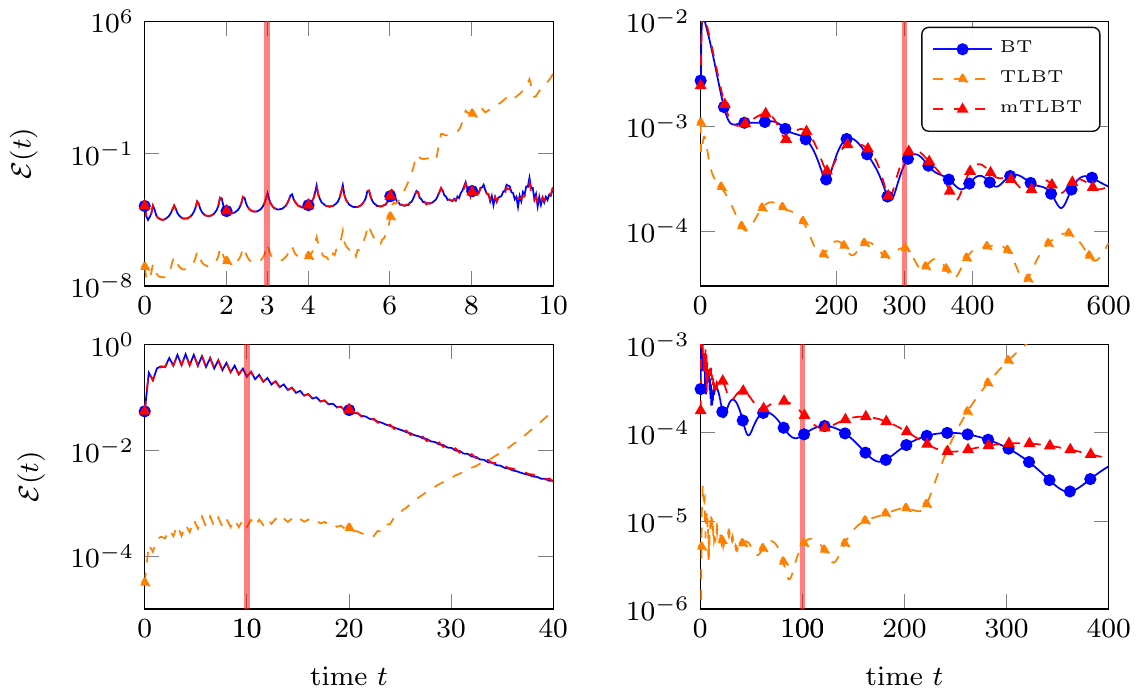}
  \caption{Relative errors $\cE(t)$ obtained by different BT versions: (top left) \texttt{bips\_3078}, $t_e=3$, $u(t)=$impulse, $r=100$,
(top right) \texttt{vertstand}, $t_e=300$, $u(t)=u_*$, $r=20$, (bottom left) \texttt{rail}, $t_e=10$, $u(t)=$impulse, $r=50$, (bottom left)
\texttt{rail}, $t_e=100$, $u(t)=50$, $r=50$.}
  \label{fig:reduct}
\end{figure}

\begin{figure}[!h]
  \centering
\includegraphics{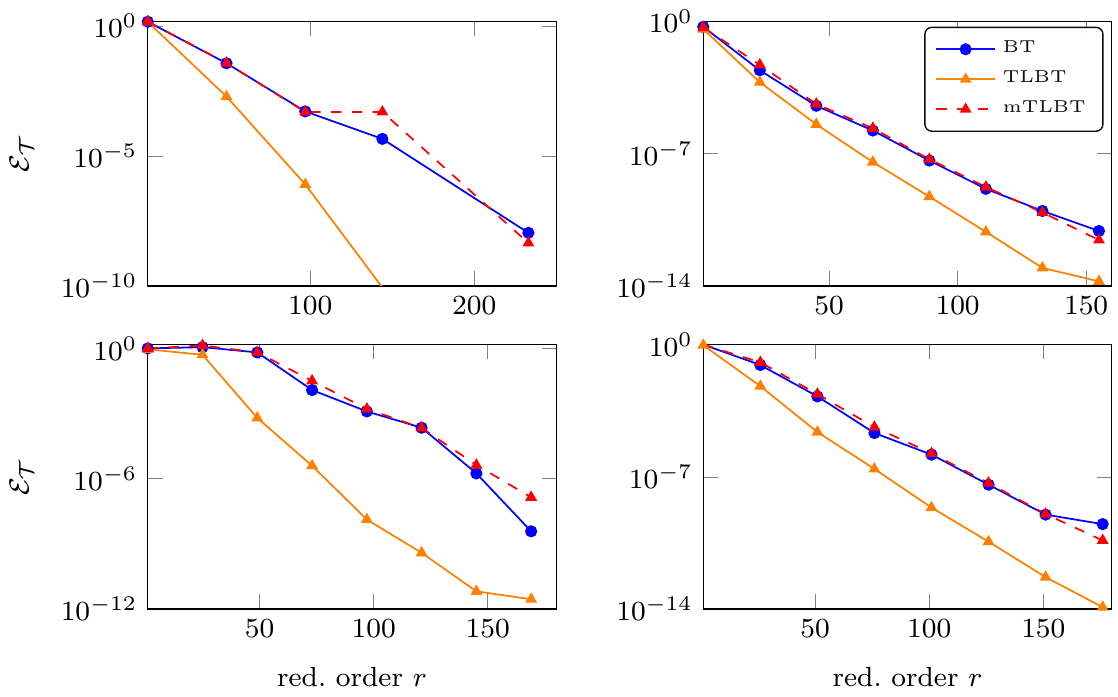}
  \caption{Maximum relative errors $\cE_{\cT}$ in $[0,t_e]$ against increasing reduced orders $r$ for different $u(t)$ and BT variants: (top left)
\texttt{bips\_3078}, $t_e=3$, $u(t)=$impulse,
(top right) \texttt{vertstand}, $t_e=300$, $u(t)=u_*$, (bottom left) \texttt{rail79k}, $t_e=10$, $u(t)=$impulse, (bottom left)
\texttt{rail79k}, $t_e=100$, $u(t)=50$.}
  \label{fig:errvsord}
\end{figure}

Apparently, for the chosen orders $r$ and in the time regions of interest, the largest relative errors $\cE_{\cT}$ of the reduced order
models returned by TLBT are in most experiments more than one order of magnitude
smaller compared to standard and
modified time-limited BT. The plots in Figure~\ref{fig:errvsord} also indicate that much larger reduced order models are needed for BT and modified TLBT to
achieved the same accuracy as unmodified TLBT.  Figure~\ref{fig:reduct} shows that after leaving the time region $\cT$, TLBT delivers larger errors.
However, Table~\ref{tab:reduc} also reveals that
executing TLBT and its modified version is more time consuming
than standard BT. This is a direct consequence of the higher computation times for getting the low-rank factors of the (modified) time-limited Gramians which
was pointed out in the previous subsection (Table~\ref{tab:lrf_compare}). 

Using the concept of angles between subspaces or the modal assurance
criterion~(MAC)~\cite{Ewi00} ($MAC(x,y)=\vert y^Tx\vert^2/(\|x\|^2\|y\|^2)$) indicated that the spaces spanned by the projection matrices $T_{\text{TLBT}}$,
$S_{\text{TLBT}}$ in TLBT and $T_{\text{BT}}$, $S_{\text{BT}}$ in unrestricted BT, respectively, are different. For example, computing the MAC for the right
projection matrices $T_{\text{TLBT}}$, $T_{\text{BT}}\in\R^{n_f\times 100}$ for the \texttt{bips\_3078} system showed that only a few of the columns of  both
matrices are well correlated to each other (i.e., $MAC(T_{\text{TLBT}}e_i,T_{\text{BT}}e_j)\approx 1$ for very few $i,j\in\lbrace1,\ldots,100\rbrace$).

Moreover, albeit the higher accuracy in $[0,t_e]$, in some cases TLBT produces unstable reduced order models. 
This is especially visible in the upper left
plot of Figure~\ref{fig:response} showing the impulse responses of \texttt{bips\_3078}. After leaving $[0,t_e]$, the impulse response of reduced order model
generated by TLBT exhibits an exponential growth and departs from the original response.
As illustrated with the \texttt{rail} examples and proven in~\cite{morGawJ90}, using a higher end time $t_e$ can already cure this. Modified
TLBT does not generate unstable reduced systems, but its approximation quality in $[0,t_e]$ is very close to standard BT
without time restrictions. Taking also into account the higher computational costs of modified TLBT given in Table~\ref{tab:reduc}, the introduction of the time
restriction is 
rendered essentially redundant because no smaller errors are achieved in the targeted time region. Hence, if stability preservation in the reduced order model
is crucial, we recommend to stick to standard~BT. 

To conclude, TLBT fulfills the goal to acquire smaller errors in the desired time interval $[0,t_e]$, but at the price of somewhat larger execution times
because the computation of required low-rank Gramian factors is currently more costly. 


\smallskip
Now we carry out one experiment to evaluate the approximation qualities of TLBT for nonzero $t_s$. The \texttt{vertstand} example is used and the reduced
order model should approximate the output $y(t)$ with respect to $u(t)=\delta(t)v$ and $u(t)=u_*=[5\cdot 10^4\cdot 0.198(\sin(t\pi/100)^2), 4, 2, 1, 3,1]^T$ in
the time window $\cT=[t_s,t_e]=[50,100]$.
The low-rank factors of the Gramians are computed as before using Algorithm~\ref{alg:ksm_tlimcale} with the required small extensions mentioned in
Section~\ref{ssec:multitime}.

\begin{table}[tb]
  \footnotesize
  \centering
  \caption{Results of BT, TLBT, and modified TLBT reduction to $r=15$ of \texttt{vertstand} example with respect to time frame $\cT=[t_s,t_e]=[50,100]$.}
  \begin{tabularx}{1\linewidth}{|X|r|r|r|r|r|r|r|r|r|}
    \hline   
&\multicolumn{3}{c|}{BT}&\multicolumn{3}{c|}{TLBT}&\multicolumn{3}{c|}{mod. TLBT}\\
    \hline
    input $u$&$\cE_{\cT}$&$t_{\text{mor}}$&$s$&$\cE_{\cT}$&$t_{\text{mor}}$&$s$&$\cE_{\cT}$&$t_{\text{mor}}$&$s$\\
    \hline
$\delta$&7.95e-04&14.2&1&3.07e-04&37.6&0&9.21e-04&32.5&1\\ 
$u_*$&6.34e-03&14.2&1&1.78e-02&37.6&0&6.85e-03&32.5&1\\ 
\hline
\end{tabularx}\label{tab:reduc_ts}
\end{table}

\begin{figure}[t]
  \centering
  \includegraphics{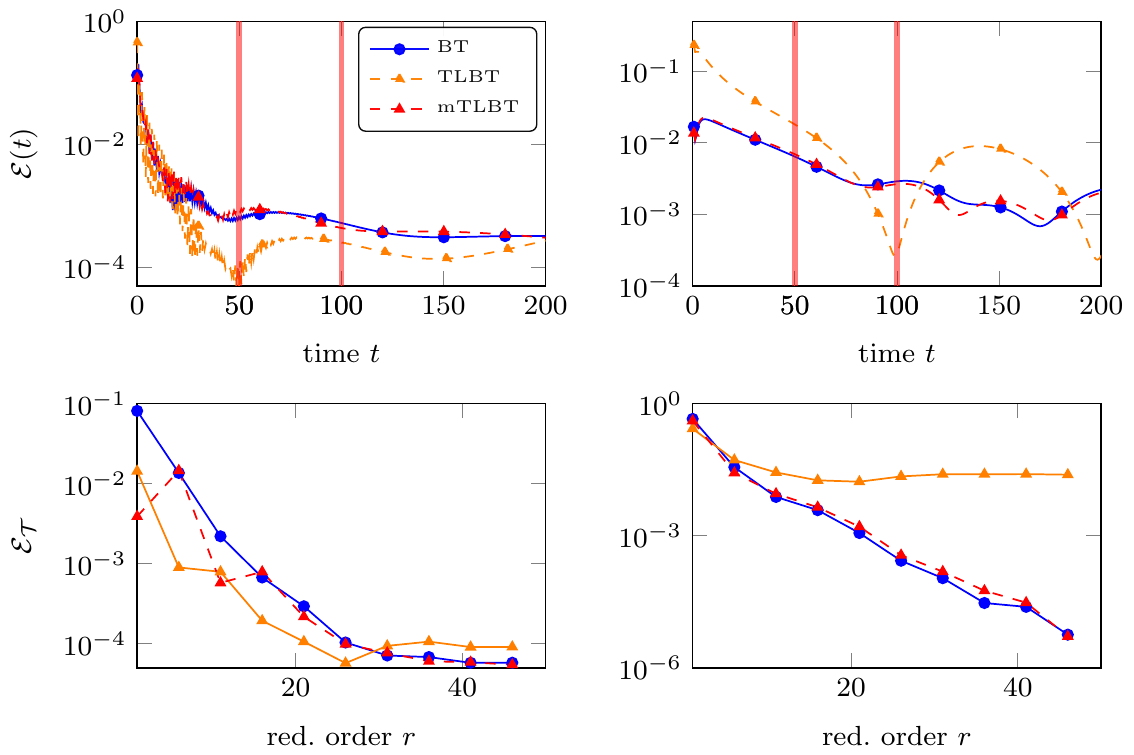}
  \caption{Results for the reduction of the \texttt{vertstand} example in $[t_s,t_e]=[50,100]$. The top plots show the relative error norms against $t$ for
$u(t)=\delta(t)v$ (left) and $u(t)=u_*$ (right) after a reduction to $r=15$. The behavior of $\cE_{\cT}$ in $[t_s,t_e]$ for increasing reduced dimensions $r$
is illustrated in the bottom plots.}
  \label{fig:results_tste}
\end{figure}
The results are summarized in Table~\ref{tab:reduc_ts} and Figure~\ref{fig:results_tste} illustrates the relative errors plots as well as the
largest relative error in $\cT$ against the reduced order $r$. Compared to standard BT, TLBT delivers, as expected, more accurate results of the impulse
response, but fails for $u(t)=u_*$. Moreover, the bottom left plot Figure~\ref{fig:results_tste} shows that the decay of
$\cE_{\cT}$ with respect to
the impulse response is less monotonic as in the case $t_s=0$ s.t. for some reduced orders $r$, especially larger ones, standard BT outperforms TLBT.  
The failure of TLBT to approximate the transient response for arbitrary inputs $u(t)$ was also observed in other experiments with different time intervals and
examples, and occasionally even for the impulse response. In experiments with smaller systems, using exact matrix exponentials and Gramian factors did
not yield any improvement of the reduction results such that the problems with TLBT are most likely not a result of inaccurate Gramian approximations.
In summary, although TLBT in the current form does indeed deliver significantly more accurate reduced order models in the time intervals $[0,t_e]$, the  same
cannot be said when time intervals $[t_s,t_e]$, $t_s>0$ are considered. Improving the performance of TLBT in this scenario is therefore subject of further
research.

%
As final experiment we briefly test the application of TLBT for the reduction of unstable systems. For this purpose a modification of the \texttt{bips\_606}
example is used with $\hat A:=A+0.2 M$ leading to $\max\Real{\lambda}=0.323$, $\lambda\in\Lambda(\hat A)$. The final time is set to $t_e=5$ and
a reduced order model with $r=100$ is generated. Due to the comparatively small system dimension ($n_f=606$),
the matrix exponentials and Gramians could be computed by direct, dense methods for these tasks. Moreover, the employed rational Krylov methods from the
experiments before were not able to compute the required low-rank Gramians factors. The impulse response of exact and reduced system and the relative error are
illustrated in Figure~\ref{fig:results_bips_us}.
\begin{figure}[t]
  \centering
      \includegraphics{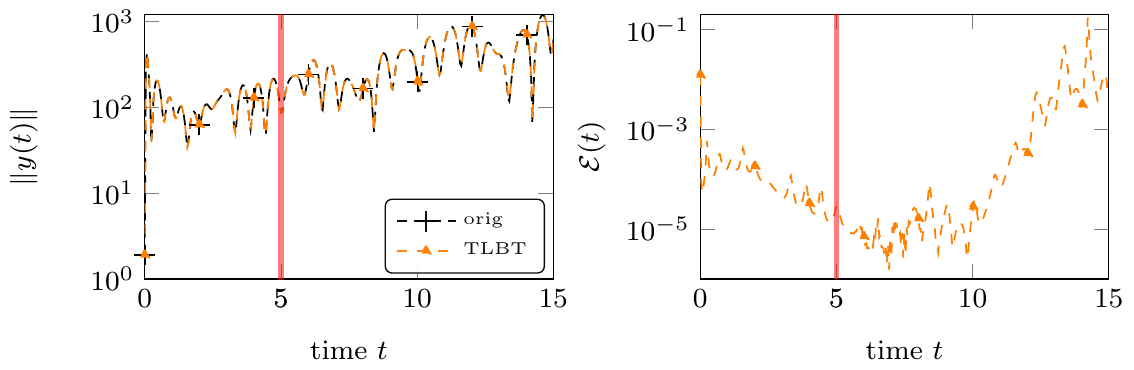}
  \caption{Results for the reduction of the unstable variation of the \texttt{bips\_606} example for $t_e=5$, $r=100$, $u=\delta(t)v$ (left: system
response,
right: relative error).}
  \label{fig:results_bips_us}
\end{figure}
Apparently, TLBT was able to reduce this mildly unstable system to a reduced order model with maximal relative error $\cE_{\cT}\approx 1.2\cdot10^{-2}$ in 
$[0,t_e]$. Hence, provided $t_e$ is chosen in a reasonable way, e.g., with a sufficient distance to the exponential growth of $y(t)$, TLBT appears
to be a potential candidate from model order reduction of unstable systems. However, the applications to large-scale unstable systems is currently still
difficult because algorithms for computing low-rank solutions of Lyapunov equations usually require that $A$ is (anti)stable. Advances in this direction are,
therefore, necessary to pursue this type of reduction further.
\section{Conclusions}\label{sec:concl}
BT model order reduction for large-scale systems restricted to finite time intervals~\cite{morGawJ90} was investigated. 
The resulting Lyapunov equations that have to be solved numerically also include the matrix exponential in their inhomogeneities.
We first showed that the difference of time-limited and infinite Gramians decays for increasing times in a similar way as the impulse
response of the underlying system. Hence, for small time intervals, a reduced numerical rank of the time-limited  Gramians can be observed.
Future research should further investigate the influence of the chosen end time on the eigenvalue decay of the Gramians.
  
As in frequency-limited BT~\cite{morBenKS16}, we proposed to handle the matrix exponentials and the Lyapunov equations by an efficient
rational Krylov subspace method incorporating a subspace recycling idea. While this numerical approach already led to satisfactory results, there is some
room for improvement, especially regarding the approximation of the action of the matrix exponential. In this context, further work could include, for instance,
enhanced strategies like tangential directions~\cite{DruSZ14}, different inner products~\cite{FroLS17}, or using different
methods~\cite{AlmH11,CalKOetal14} altogether.  

The numerical reduction experiments indicated that TLBT is able to acquire several orders of magnitude more accurate reduced order models in time intervals
of the form $[0,t_e]$ at a somewhat higher, although still comparable, numerical effort. 
Similar techniques were applied for stability preserving modified TLBT~\cite{morGugA04} which, however, could not keep up with standard or time-limited
BT in terms of efficiency and accuracy of the reduced order models. Hence, we recommend to use standard BT if the preservation of stability is an irrevocable
goal. For keeping the high accuracy in the time-interval of interest, a different way to make TLBT a stability preserving method has to be found. 
TLBT for time regions  $[t_s,t_e]$ with nonzero start times $t_s>0$ provided much worse results than with $t_s=0$, except for approximating the impulse
response. In the form introduced in~\cite{morGawJ90}, TLBT appears to be incapable of producing good reduced order models with respect to $[t_s,t_e]$ and, thus,
further
research is necessary in this direction. The results in~\cite{morBeaGM17,morHeiRA11} might be one possible ingredient for this.
A short experiment also indicated that TLBT can be employed to reduce unstable systems. The efficient computation of low-rank Gramian factors in this
case is
currently not as advanced as in the stable situation, making this a further interesting research topic.

\textbf{Acknowledgements}
 I thank the referees for their helpful comments. Moreover, I am grateful for the constructive discussions with Maria Cruz Varona, Serkan Gugercin,  and 
Stefan Guettel.

\end{document}